\documentclass{article}

\usepackage{amsmath,amsthm,amssymb}
\usepackage{graphicx}
\usepackage[numbers]{natbib}

\newtheorem{theorem}{Theorem}[section]
\newtheorem{lemma}[theorem]{Lemma}
\newtheorem{theo}[theorem]{Theorem}
\newtheorem{prop}[theorem]{Proposition}
\newtheorem{coro}[theorem]{Corollary}

\theoremstyle{definition}
\newtheorem{example}[theorem]{Example}
\newtheorem{defn}[theorem]{Definition}
\newtheorem{remark}[theorem]{Remark}

\newcommand{\myaddress}
{\parbox{3in}{\footnotesize
\begin{center}
				Department of Mathematics, City University London,\\
				Northampton Square, London, EC1V 0HB.\\
				{\it E-mail address}: {\tt Elizabeth.Banjo.1@city.ac.uk}
\end{center}
}}

\title{The generic representation theory of the Juyumaya algebra of braids and ties}
\author{ELIZABETH O. BANJO\thanks{This work was carried out while the author was a visiting reseacher in the International Research Visitors Centre of the University of Leeds}\\
				\myaddress}
				
\date{}				
				
\begin{document}	
		
\maketitle
				
\begin{abstract}
In this paper we determine the complex generic representation theory of the Juyumaya algebra. We show that a certain specialization of this algebra is isomorphic to the small ramified partition algebra, introduced by P. Martin. 
\end{abstract}

%////////////////////////////////////////////////////////////////////////////////////////////////////////////
%
\section{Introduction}%//////////////////////////////////////////////////////////////////////////////////////
%
%////////////////////////////////////////////////////////////////////////////////////////////////////////////

The main result of this paper is a determination of the complex generic representation theory of a family of finite dimensional algebras $\{\mathcal{E}_n(x) \colon n \in \mathbb{N}, \; x \in \mathbb{C}\}.$ These algebras were introduced by Juyumaya in \cite{juyumaya99} and studied further by Aicardi and Juyumaya \cite{aicardi} and by Ryom-Hansen \cite{ryom2010}.

The Juyumaya algebras $\mathcal{E}_n(x)$ are a generalisation of the Iwahori-Hecke algebras \cite{mathas}. The complex generic representation theory of the Iwahori-Hecke algebras is reasonably well known (see e.g. \cite{mathas} for a review). Like the Iwahori-Hecke algebras it turns out, as we shall show, that the Juyumaya algebras are generically semisimple. In contrast to the Iwahori-Hecke case however, the generic representation theory of the Juyumaya algebras over the field of complex numbers was only known for the cases $n = 1,2,3$ \cite{aicardi}, \cite{ryom2010}. Here we determine the result for all $n.$   

Our method is to establish, for each $n,$ an isomorphism of $\mathcal{E}_n(1)$ (over $\mathbb{C}$) with an algebra, the small ramified partition algebra $P_n^{\ltimes}$ \cite{martin}, of known complex representation theory and then use general arguments of Cline, Parshall and Scott \cite{cline1999}. 

The paper is organized as follows: we start (in section \ref{ramPart}) by reviewing the small ramified partition algebra $P_n^{\ltimes}$. In section \ref{braidTie}, we recall the definition of the algebras $\mathcal{E}_n(x)$. In section \ref{mainRslt}, we prove that, for each $n$, the algebra $\mathcal{E}_n(1)$ and $P_n^{\ltimes}$ are isomorphic as $\mathbb{C}$-algebras. We use this result as well as other results including arguments in \cite{cline1999} to show that $\mathcal{E}_n(x)$ is semisimple (of given structure) over $\mathbb{C}$ for generic choices of $x$ in section \ref{repthry}.

%/////////////////////////////////////////////////////////////////////////////////////////////////////////
%
\section{The Small Ramified Partition Algebras} \label{ramPart}%//////////////////////////////////////////
%
%/////////////////////////////////////////////////////////////////////////////////////////////////////////

In order to define the \emph{small} ramified partition algebra, it will be helpful to recall the definition of the ramified partition algebra, given in \cite{martin2004}, from which this algebra can be constructed. We assume familiarity with the \emph{ordinary} partition algebra \cite{martin1994}.

%///////////////////////////////////////////////////////////////////////////////////////////////////////////
%
\subsection{Some definitions and notation}%/////////////////////////////////////////////////////////////////
%
%///////////////////////////////////////////////////////////////////////////////////////////////////////////

For $n \in \mathbb{N},$ we define $\underline{n} = \{1,2, \ldots,n\}$ and $\underline{n}' = \{1',2', \ldots, n'\}.$ Let $S_n$ denote the symmetric group on $\underline{n}$ and $\sigma_{i,i+1}$ the transposition $(i,i+1) \in S_n.$  When we write $\underline{d}$ for a \emph{poset}, we mean $\{1,2, \ldots, d\}$ equipped with the natural partial order $\leq$ (although we will often concentrate on the case $\underline{2} = (\{1,2\}, \; 1 \leq 2)$ in this paper). For a set $X$, we write $\mathcal{P}_X$ for the set of partitions of $X.$ 

\noindent For example,
\begin{eqnarray*}
\mathcal{P}_{\underline{2} \cup \underline{2}'} &=& \{\{\{1\},\{2\},\{1'\},\{2'\}\}, \{\{1,2,1',2'\}\}, \{\{1,2,1'\},\{2'\}\},\\ 
& & \{\{1,2,2'\},\{1'\}\}, \{\{1,1',2'\},\{2\}\}, \{\{2,1',2'\},\{1\}\}, \{\{1,2\},\{1',2'\}\},\\
& & \{\{1,1'\},\{2,2'\}\}, \{\{1,2'\},\{1',2\}\}, \{\{1,2\},\{1'\},\{2'\}\},\\ 
& & \{\{1,1'\},\{2\},\{2'\}\}, \{\{1,2'\},\{1'\},\{2\}\}, \{\{1',2\},\{1\},\{2'\}\},\\ 
& & \{\{2,2'\},\{1\},\{1'\}\}, \{\{1',2'\},\{1\},\{2\}\}\}. 
\end{eqnarray*}

In an element of $\mathcal{P}_{\underline{n} \cup \underline{n}'}$ we call the individual subsets of the set of objects \emph{parts}. For instance, $\{1,2\}$ is a part of the partition $\{\{1,2\},\{1'\},\{2'\}\}$ in $\mathcal{P}_{\underline{2} \cup \underline{2}'}$. For $X' \subset X$ and $c \in P_X$ we define $c|_{X'}$ as the collection of the sets of the form $c_i \cap X'$ where $c_i$ are elements of the partition $c$.

\begin{defn}
For a set $X$, we define the \emph{refinement} partial order on $\mathcal{P}_X$ as follows. For $p, \;q \in \mathcal{P}_X,$ we say $p$ is a refinement of $q,$ denoted $p \leq q,$ if each part of $q$ is a union of one or more parts of $p.$   
\end{defn}

\begin{prop}[See {\citep[Prop. 1]{martin2004}}]
Let $p, \; q \in \mathcal{P}_X$ and $Y \subseteq X.$ Then $p \leq q$ implies $p|_Y \leq q|_Y.$  \qed 
\end{prop}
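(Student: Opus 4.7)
The plan is to unwind the definitions: both statements concern when one partition refines another, and the operation $c \mapsto c|_Y$ is just part-wise intersection with $Y$, so the refinement relation ought to pass through intersection automatically. I would prove this directly by picking an arbitrary part of $q|_Y$ and exhibiting it as a union of parts of $p|_Y$.

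Concretely, let me enumerate $p = \{p_1, \dots, p_k\}$ and $q = \{q_1, \dots, q_\ell\}$. Since $p \leq q$, for each $j \in \{1,\dots,\ell\}$ there is a subset $I_j \subseteq \{1,\dots,k\}$ with $q_j = \bigcup_{i \in I_j} p_i$, and moreover the $I_j$ partition $\{1,\dots,k\}$. Now a generic part of $q|_Y$ has the form $q_j \cap Y$, and the distributivity of intersection over union gives
\[
q_j \cap Y \;=\; \Bigl(\bigcup_{i \in I_j} p_i\Bigr) \cap Y \;=\; \bigcup_{i \in I_j}\bigl(p_i \cap Y\bigr).
\]
Each $p_i \cap Y$ is by definition a part of $p|_Y$, so $q_j \cap Y$ is displayed as a union of parts of $p|_Y$. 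Ranging over $j$ shows that every part of $q|_Y$ is such a union, which is exactly the condition $p|_Y \leq q|_Y$.

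The only mild subtlety, and the place I would be careful, is the handling of empty intersections: some $p_i \cap Y$ may be empty, and strictly speaking a partition does not contain the empty set. Depending on how one reads the definition of $c|_{X'}$, one either tacitly discards empty parts or allows them; in either convention the argument above is unaffected, because dropping empty summands from the union on the right does not change the set $q_j \cap Y$, and the remaining nonempty $p_i \cap Y$ are genuine parts of $p|_Y$. I do not anticipate any real obstacle — the result is essentially a one-line consequence of distributivity, and the proof that Martin gives is presumably this same observation.
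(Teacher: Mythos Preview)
Your argument is correct and is exactly the standard distributivity-of-intersection-over-union proof one would expect; the handling of empty intersections is also fine. The paper itself does not prove this proposition but simply cites it from \cite{martin2004} with a \qed, so there is nothing further to compare against.
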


\begin{remark}
For $F$ a field, $n \in \mathbb{N}, \; \delta' \in F,$ the set $\mathcal{P}_{\underline{n} \cup \underline{n}'}$ is a basis for the partition algebra \cite{martin1994} which we denote by $P_n(\delta').$ The dimension of $P_n(\delta')$ is therefore the Bell number $B_{2n}$ (see \cite{cameron}). The group algebra $FS_n$ of the symmetric group $S_n$ is embedded in $P_n(\delta')$ as the span of the partitions with every part having exactly two elements, one primed and the other unprimed, of $\underline{n} \cup \underline{n}'.$
\end{remark}

%//////////////////////////////////////////////////////////////////////////////////////////////////////////
%
\subsection{Ramified partition algebra}%///////////////////////////////////////////////////////////////////
%
%//////////////////////////////////////////////////////////////////////////////////////////////////////////

The ramified partition algebra was introduced by Martin \cite{martin2004} as a generalisation of the ordinary partition algebra $P_n(\delta').$

\begin{defn}
Let $(T, \leq)$ be a finite poset. For a set $X,$ we define $\mathbf{P}_X^{T}$ to be the subset of the Cartesian product $\prod_T\mathcal{P}_X$ consisting of those elements $q = (q_i \colon i \in T)$ such that $q_i \leq q_j$ whenever $i \leq j.$ Any such element $q \in \mathbf{P}_X^{T}$ will be referred to as a \emph{T-ramified partition}.  
\end{defn}

\noindent For example, some elements of $\mathbf{P}_{\underline{2} \cup \underline{2}'}^{\underline{2}}$ are listed below:
 \begin{eqnarray*}
\pi_1 &=& (\{\{1\},\{2\},\{1'\},\{2'\}\}, \{\{1\},\{2\},\{1'\},\{2'\}\}) \\
\pi_2 &=& (\{\{1,2\},\{1'\},\{2'\}\}, \{\{1,2,1'\},\{2'\}\}) \\
\pi_3 &=& (\{\{1,2'\},\{2,1'\}\}, \{\{1,2,1',2'\}\}),
\end{eqnarray*}
and so on.

We now recall from \cite{martin2004} the diagrammatic realization of an element of $\mathbf{P}_{\underline{n} \cup \underline{n}'}^{T}.$ We shall only need the case $T = \underline{2}$ in this paper. We first look at an example therein. The diagram

\[ 
\includegraphics[scale=0.4]{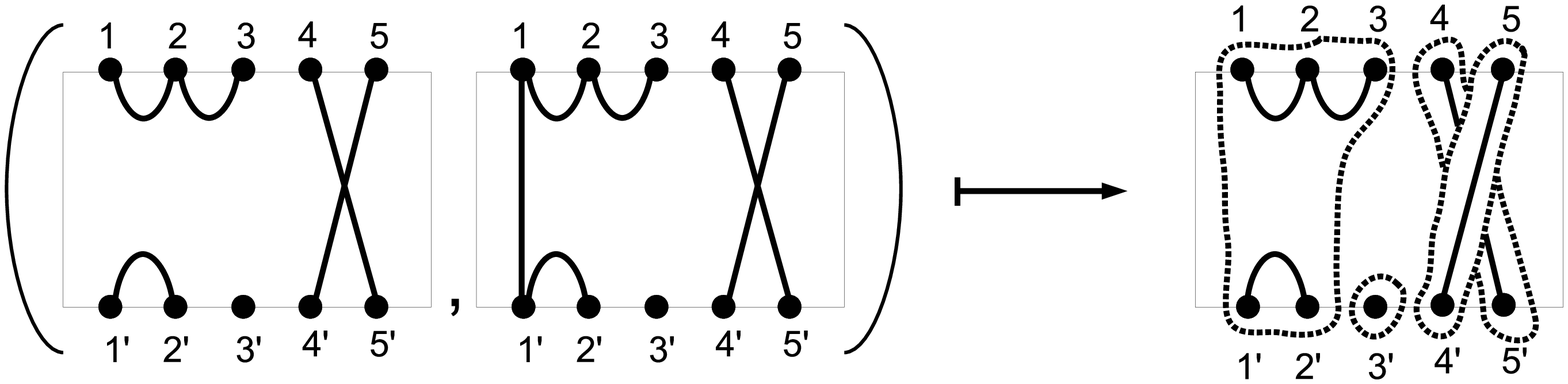}
\] 

\noindent represents $(\{\{1,2,3\},\{1',2'\},\{3'\},\{4,5'\},\{5,4'\}\},\{\{1,2,3,1',2'\},\{3'\},\{4,5'\},$\\
$\{5,4'\}\}).$ For $T = \underline{1}$ case, the diagram coincides with a partition algebra diagram (cf. \cite{halverson2005}, \cite{martin1996}, or \cite{martin2004}). Thus, a diagram of a $\underline{2}$-ramified partition can be thought of as an \emph{enhanced} partition algebra diagram in which every connected component lies inside an ``island'', and the union of components in the island is the less refined part. Note that islands can cross (as illustrated in the diagram above), but it is not hard to draw them unambiguously.

A diagram representing a T-ramified partition is not unique. We say two diagrams are equivalent if they give rise to the same T-ramified partition.

The term \emph{ramified partition diagram} (or sometimes \emph{ramified $2n$-partition diagram} to indicate the number of vertices) will be used to mean the equivalence class of the given diagram.

We refer to the interior line-segments in the underlying partition algebra diagram of a ramified partition diagram as \emph{bones}. 
 
The composition of ramified $2n$-partition diagrams is as follows. First identify the bottom of one ramified $2n$-partition diagram with the top of the other. Then replace bone (resp. island) connected components that are isolated from the boundaries in composition by a factor $\delta_1$ (resp. $\delta_2$) as shown in Figure \ref{compRam}. 

\begin{figure}
\[
\begin{tabular}[c]{@{}c@{}}
\includegraphics[scale=0.4]{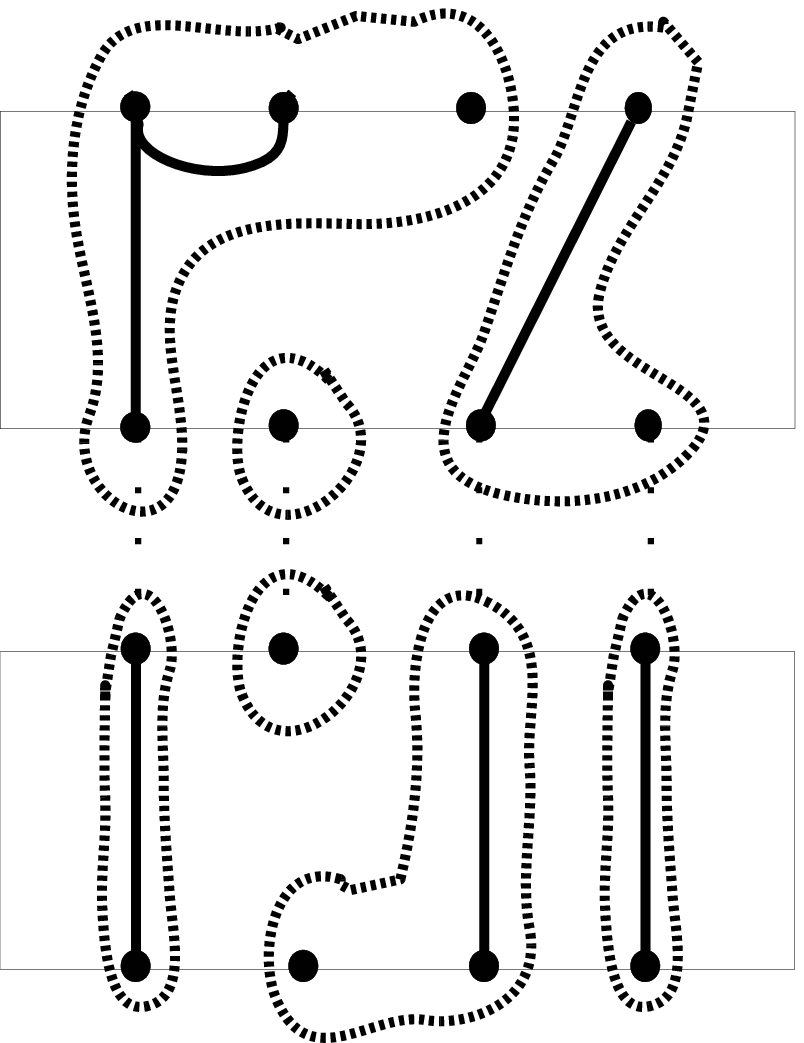}
\end{tabular} \quad = \; \delta_1 \delta_2 \quad
\begin{tabular}[c]{@{}c@{}}
\includegraphics[scale=0.4]{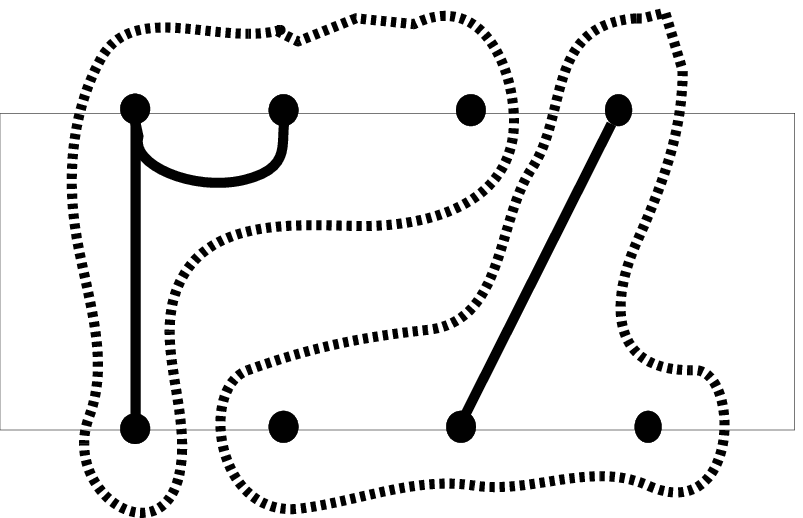}
\end{tabular}
\]
\caption{The composition of diagrams in $P_4^{(\underline{2})}(\delta)$}
\label{compRam}
\end{figure}

Throughout this paper, we shall identify a ramified partition with its ramified partition diagram and speak of them interchangeably.

\begin{prop}[See {\citep[Prop. 2]{martin2004}}]
For any $d$-tuple $\delta = (\delta_1, \ldots, \delta_d) \in F^d,$ the set $\mathbf{P}_{\underline{n} \cup \underline{n}'}^{T}$ forms a basis for a subalgebra of $\bigotimes_{t \in T}P_n(\delta_t).$    
\end{prop}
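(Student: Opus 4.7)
The plan is first to establish the basis property, which is essentially immediate, and then to reduce closure under multiplication to a monotonicity statement about composition in the ordinary partition algebra. Since $\mathcal{P}_{\underline{n} \cup \underline{n}'}$ is a basis of $P_n(\delta_t)$ for every $t \in T$, the Cartesian product $\prod_{t \in T} \mathcal{P}_{\underline{n} \cup \underline{n}'}$, identified with the set of pure tensors of basis elements, is a basis of $\bigotimes_{t \in T} P_n(\delta_t)$. The set $\mathbf{P}_{\underline{n} \cup \underline{n}'}^{T}$ is a subset of this basis and is therefore automatically linearly independent; it only remains to verify that its $F$-linear span is closed under multiplication.

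Because multiplication in the tensor product acts componentwise on pure tensors, for $q = (q_t)_{t \in T}$ and $q' = (q'_t)_{t \in T}$ in $\mathbf{P}_{\underline{n} \cup \underline{n}'}^{T}$ the product $q \cdot q'$ equals $\bigl(\prod_{t \in T} \delta_t^{k_t}\bigr)$ times the tuple $(q_t \ast q'_t)_{t \in T}$, where $q_t \ast q'_t$ is the underlying set partition produced by stacking the two partition algebra diagrams and $k_t$ counts the isolated interior components that appear. Since the prefactor is just a scalar in $F$, the question reduces to showing that $(q_t \ast q'_t)_{t \in T}$ again satisfies the refinement constraint defining $\mathbf{P}_{\underline{n} \cup \underline{n}'}^{T}$; equivalently, whenever $p \leq q$ and $p' \leq q'$ in $\mathcal{P}_{\underline{n} \cup \underline{n}'}$, I must show $p \ast p' \leq q \ast q'$.

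For this monotonicity step I would pass to equivalence relations. Identify each set partition with its equivalence relation so that the refinement order $\leq$ corresponds to containment of relations. Stacking two diagrams produces a partition on $\underline{n} \cup \underline{n}' \cup \underline{n}''$ whose equivalence relation is the one generated by the relations of the two factors; restricting to $\underline{n} \cup \underline{n}''$ recovers $\ast$. If $\sim_p \subseteq \sim_q$ and $\sim_{p'} \subseteq \sim_{q'}$, then the equivalence relation generated by $(p, p')$ is contained in the one generated by $(q, q')$, and a single application of the earlier proposition (preservation of $\leq$ under restriction to a subset) applied to $\underline{n} \cup \underline{n}''$ then yields $p \ast p' \leq q \ast q'$, as required.

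The main obstacle I foresee is purely bookkeeping: one must carefully distinguish the ordinary partition-algebra product (which carries a scalar prefactor $\delta_t^{k_t}$) from the underlying set-partition composition $\ast$, and must identify the diagrammatic stacking operation with the join, in the lattice of partitions of the enlarged vertex set, of the two equivalence relations. Once these identifications are cleanly made, monotonicity of $\ast$ under $\leq$ is formal, and the proposition follows.
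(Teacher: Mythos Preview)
Your argument is correct and is essentially the same as the paper's: the paper simply cites \cite{martin2004} and remarks that the proof is ``in essence the observation that bones continue to lie within islands under composition,'' which is exactly the diagrammatic phrasing of the monotonicity statement $p \leq q,\ p' \leq q' \Rightarrow p \ast p' \leq q \ast q'$ that you isolate and prove. Your version is more explicit in that you separate the scalar prefactor from the underlying set-partition composition and then verify monotonicity via equivalence relations together with the earlier restriction proposition, but the underlying idea is identical.
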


\begin{proof}
The proof can be found in \cite{martin2004}, but is in essence the observation that bones continue to lie within islands under composition.
\end{proof}

For $\delta = (\delta_1, \delta_2, \ldots, \delta_d)$ $\in F^d,$ we define the \emph{T-ramified partition algebra $P_n^{(T)}(\delta)$} over $F$ as the finite dimensional algebra with basis $\mathbf{P}_{\underline{n} \cup \underline{n}'}^{T}$ and the above composition.

A bone connecting a vertex in the northern boundary to a vertex in the southern boundary of the frame will be called a \emph{propagating line}. 

\noindent The complex generic representation theory of $P_n^{(T)}(\delta)$ has been determined in the case $T = \underline{2}$ in \cite{martin2004}. It was shown that there are many choices of $\delta$ such that $P_n^{(\underline{2})}(\delta)$ is not semisimple for sufficiently large $n,$ but that it is generically semisimple for all $n.$

%///////////////////////////////////////////////////////////////////////////////////////////////////////////
%
\subsection{Small Ramified Partition Algebra $P_n^{\ltimes}$} \label{sRamPart}%/////////////////////////////
%
%///////////////////////////////////////////////////////////////////////////////////////////////////////////

In this section we recall the definition of the small ramified partition algebra. To define this algebra we require the following definitions.  

\medskip

\begin{defn}
We define $\text{diag}-\mathcal{P}_n$ to be the subset of $\mathcal{P}_{\underline{n} \cup \underline{n}'}$ such that $i,i'$ are in the same part for all $i \in \mathbb{N}.$ 
\end{defn}

For example, recall from \cite{martin1994} the special elements in $\mathcal{P}_{\underline{n} \cup \underline{n}'}$ as follows.
\begin{eqnarray*}
1 &=& \{\{1,1'\},\; \{2,2'\}, \; \ldots \{i,i'\}, \; \ldots \{n,n'\}\}\\ 
A^{i,j} &=& \{\{1,1'\},\; \{2,2'\}, \; \ldots \{i,i',j,j'\}, \; \ldots \{n,n'\}\}\\
\sigma_{i,j} &=& \{\{1,1'\},\; \{2,2'\}, \; \ldots \{i,j'\},\{j, i'\} \; \ldots \{n,n'\}\} \\
e_i &=& \{\{1,1'\},\; \{2,2'\}, \; \ldots \{i\},\{i'\}, \; \ldots \{n,n'\}\}.
\end{eqnarray*}

Here, $1$ and $A^{i,j}$ are in $\text{diag}-\mathcal{P}_n$.   

\begin{defn}\label{Delta}
For any $\delta' \in F,$ we define $\Delta_n$ as the subalgebra of $P_n(\delta')$ generated by $$\langle S_n, A^{i,j}  (i,j = 1,2, \ldots, n) \rangle.$$ 
\end{defn}

\begin{prop} \label{prop:injMap}
The map $$\ltimes \colon S_n \times \text{diag}-\mathcal{P}_n \to S_n \times \mathcal{P}_{\underline{n} \cup \underline{n}'}$$ given by $$(a,b) \mapsto (a,ba)$$ defines an injective map. 
\end{prop}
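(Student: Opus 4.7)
The plan is to use the invertibility of symmetric group elements inside the partition algebra $P_n(\delta')$, as noted in the remark on $FS_n \hookrightarrow P_n(\delta')$. Concretely, suppose $(a_1, b_1)$ and $(a_2, b_2)$ in $S_n \times \text{diag}-\mathcal{P}_n$ have the same image under $\ltimes$. Comparing the first coordinates of $(a_1, b_1 a_1)$ and $(a_2, b_2 a_2)$ immediately gives $a_1 = a_2$; call this common element $a$. The task then reduces to showing that the equation $b_1 a = b_2 a$ in $P_n(\delta')$ forces $b_1 = b_2$.

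For this, I would invoke the fact that the permutation $a \in S_n \subset P_n(\delta')$ has a two-sided inverse $a^{-1}$ in the partition algebra, namely the diagram of the inverse permutation. Right-multiplying both sides of $b_1 a = b_2 a$ by $a^{-1}$ and using associativity of diagram composition together with $a a^{-1} = 1$ yields $b_1 = b_2$, completing the argument.

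The only point that needs a sentence of justification is the invertibility of $a$ in $P_n(\delta')$, and this is exactly the content of the embedding $FS_n \hookrightarrow P_n(\delta')$ recorded in the earlier remark, so the proof is essentially a one-liner once the ambient algebra structure is acknowledged. I do not anticipate any genuine obstacle: the map $b \mapsto ba$ on $P_n(\delta')$ is bijective (right multiplication by a unit), hence a fortiori injective on the subset $\text{diag}-\mathcal{P}_n$, and this is all the proposition asserts. Note in particular that one need not analyze the image inside $\mathcal{P}_{\underline{n} \cup \underline{n}'}$ or verify any compatibility with the diagonal condition on $b$; the argument is purely formal and relies on nothing beyond the existence of $a^{-1}$ in $P_n(\delta')$.
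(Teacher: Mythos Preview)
Your argument is correct and essentially identical to the paper's own proof: equate first coordinates to get $a_1=a_2$, then cancel the common permutation on the right using its invertibility in the partition algebra. The only difference is that you spell out the justification for invertibility via the embedding $FS_n \hookrightarrow P_n(\delta')$, whereas the paper simply asserts ``$c$ is invertible''.
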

\begin{proof}
The well-definedness of $\ltimes$ is clear. To prove that $\ltimes$ is an injective map, it suffices to show that if $(a,ba)$ is equal to $(c,dc)$ in $S_n \times \mathcal{P}_{\underline{n} \cup \underline{n}'}$, then $(a,b)$ is equal to $(c,d)$ in $S_n \times \text{diag}-\mathcal{P}_n.$
Assume that $(a,ba) = (c,dc).$ Since $a = c,$ then $bc = dc.$ But $c$ is invertible, thus, $b=d.$
\end{proof}

\noindent Note that $\ltimes$ is not a surjective map.

\begin{defn}
We define $\mathbb{P}_{\underline{n} \cup \underline{n}'}$ to be the subset of the Cartesian product $S_n \times \mathcal{P}_{\underline{n} \cup \underline{n}'}$ given by the elements $q = (q_1,q_2)$ such that $q_1$ is a refinement of $q_2$.
\end{defn}

\begin{coro}
The set of $\ltimes(S_n \times \text{diag}-\mathcal{P}_n)$ lies in $\mathbb{P}_{\underline{n} \cup \underline{n}'}$ and forms a basis for a subalgebra of $FS_n \otimes_F \Delta_n.$ \qed    
\end{coro}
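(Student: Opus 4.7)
The plan is to verify the two assertions separately: first, that every element of the image lies in $\mathbb{P}_{\underline{n} \cup \underline{n}'}$ (i.e.\ is a refinement pair), and second, that the image is closed under the induced multiplication on $FS_n \otimes_F \Delta_n$ and is linearly independent there.

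For the refinement claim, I would fix $(a,b) \in S_n \times \text{diag}-\mathcal{P}_n$ and show that the permutation $a$, viewed as the partition $\{\{i, a(i)'\} : i \in \underline{n}\}$, refines $ba \in \mathcal{P}_{\underline{n} \cup \underline{n}'}$. Tracing the diagrammatic composition (with $b$ on top and $a$ on the bottom), the top vertex $i$ of the result equals the top vertex $i$ of $b$ and is linked through the middle vertex $i$ (since $b$ is diagonal, so $\{i,i'\}$ is inside a part of $b$) to the bottom vertex $a(i)'$ of $a$. Hence $i$ and $a(i)'$ lie in a common part of $ba$, so each two-element part of $a$ is contained in some part of $ba$, which gives $a \leq ba$.

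For closure under multiplication, I would compute the product in $FS_n \otimes_F \Delta_n$ of two basis elements $\ltimes(a,b) = a \otimes ba$ and $\ltimes(c,d) = c \otimes dc$, obtaining $(a \otimes ba)(c \otimes dc) = ac \otimes badc$. To recognize this as $\ltimes(ac, e) = ac \otimes e(ac)$ for some $e \in \text{diag}-\mathcal{P}_n$, solve $e = badc \cdot (ac)^{-1} = b \cdot (a d a^{-1})$. The key point is that $\text{diag}-\mathcal{P}_n$ is stable under both of these operations: conjugation by $a \in S_n$ sends a part of $d$ containing $\{i,i'\}$ to a part of $ada^{-1}$ containing $\{a(i), a(i)'\}$, so $ada^{-1} \in \text{diag}-\mathcal{P}_n$; and the product of two diagonal partitions is diagonal, since top-$i$ and bottom-$i$ stay linked through the middle vertex $i$ in the composite diagram. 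Hence $e \in \text{diag}-\mathcal{P}_n$ and the product lies in the image of $\ltimes$.

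For the basis property it only remains to check linear independence, but $FS_n \otimes_F \Delta_n \subset FS_n \otimes_F P_n(\delta')$ has the basis $\{a \otimes p : a \in S_n,\; p \in \mathcal{P}_{\underline{n} \cup \underline{n}'}\}$, so distinct images of $\ltimes$ are automatically linearly independent, and distinctness is exactly Proposition \ref{prop:injMap}. The only real obstacle is keeping the partition-algebra composition convention straight when verifying the refinement $a \leq ba$ and the identity $e \cdot (ac) = badc$; once the diagrammatic bookkeeping is fixed, both calculations are essentially mechanical.
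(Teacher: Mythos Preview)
Your argument is correct. The paper itself gives no proof for this corollary (it is closed immediately with \qed), treating both assertions as evident from the preceding definitions and Proposition~\ref{prop:injMap}. What you have written is exactly the natural unpacking of why the result is immediate: the refinement $a \le ba$ follows because $b$ is diagonal, closure reduces to the observation that $b\,(ada^{-1})$ is again diagonal, and linear independence is Proposition~\ref{prop:injMap} together with the standard tensor basis. There is nothing to compare against, and nothing missing in your write-up.
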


\begin{defn}
The associative algebra $P_n^{\ltimes}$ over $F$ is the free $F$-module with the set of $\ltimes(S_n \times \text{diag}-\mathcal{P}_n)$ as basis and multiplication inherited from the multiplication on $P_n^{(\underline{2})}(\delta).$ This is the \emph{small ramified partition algebra} \cite{martin}. 
\end{defn}

\noindent It is easy to check that

\begin{lemma}
The multiplication on $P_n^{\ltimes}$ is well-defined up to equivalence. \qed
\end{lemma}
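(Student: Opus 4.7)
The plan is to show that the composition rule of $P_n^{(\underline{2})}(\delta)$, when restricted to basis elements coming from $\ltimes(S_n \times \text{diag}-\mathcal{P}_n)$, produces again an element of this form, with no extraneous scalar $\delta_1, \delta_2$ factors; hence the multiplication closes on the free module with this basis, independently of the choice of diagram representative. Take two basis elements $\ltimes(a_1,b_1)=(a_1,b_1 a_1)$ and $\ltimes(a_2,b_2)=(a_2,b_2 a_2)$. The composition in $P_n^{(\underline{2})}(\delta)$ is computed coordinatewise in $P_n(\delta_1)\otimes P_n(\delta_2)$ (subject to the islands containing the bones), so I would analyse the two coordinates separately.

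For the first coordinate, $a_1$ and $a_2$ are symmetric group elements, which compose to $a_1 a_2 \in S_n$ in $P_n(\delta_1)$ without producing any closed bone-loops, since permutation diagrams have only propagating lines. For the second coordinate, I would use the conjugation action of $S_n$ on $\text{diag}-\mathcal{P}_n$: the element $a_1 b_2 a_1^{-1}$ (equivalently, the relabelling of $b_2$ by $a_1$) is again in $\text{diag}-\mathcal{P}_n$, because the conjugation permutes the pairs $\{i,i'\}$ without breaking the diagonal condition. Hence
\[
(b_1 a_1)(b_2 a_2) \;=\; b_1\bigl(a_1 b_2 a_1^{-1}\bigr)\,a_1 a_2,
\]
and I would then verify that $b := b_1\cdot(a_1 b_2 a_1^{-1})$ lies in $\text{diag}-\mathcal{P}_n$, using that $\text{diag}-\mathcal{P}_n$ is closed under multiplication inside $\Delta_n$ (the generators $A^{i,j}$ only ever merge diagonal pairs with diagonal pairs).

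Next, I would check that no scalar factors appear in either coordinate. No $\delta_1$ arises because, as noted, the bones compose as permutations. No $\delta_2$ arises because every island in the composition still contains at least one propagating bone (the permutation lines meet top-to-bottom under stacking), so no island can become detached from both boundaries. Hence the product is exactly $\ltimes(a_1 a_2,\, b)$, a single basis element of $P_n^{\ltimes}$.

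Finally, for the ``up to equivalence'' clause: the multiplication on $P_n^{(\underline{2})}(\delta)$ is defined on ramified partitions as set-theoretic data, not on particular diagrams, so the output of stacking depends only on the equivalence classes of the inputs; restricting to the sub-basis $\ltimes(S_n\times\text{diag}-\mathcal{P}_n)$ inherits this property. The only mildly subtle point — and the step I would be most careful about — is the verification that $\text{diag}-\mathcal{P}_n$ is closed under both the $S_n$-conjugation action and under multiplication inside $\Delta_n$, so that $b_1\cdot(a_1 b_2 a_1^{-1})$ really does land in $\text{diag}-\mathcal{P}_n$; everything else is a bookkeeping check on bones and islands.
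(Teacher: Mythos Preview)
Your argument is correct. The paper does not actually give a proof of this lemma: it is preceded by ``It is easy to check that'' and closed with a \qed, so there is nothing to compare your approach against. Your verification --- closure of the first coordinate in $S_n$, the rewriting $(b_1 a_1)(b_2 a_2) = b_1(a_1 b_2 a_1^{-1})\,a_1 a_2$ with $a_1 b_2 a_1^{-1}\in\text{diag}-\mathcal{P}_n$ and closure of $\text{diag}-\mathcal{P}_n$ under composition in $\Delta_n$, and the observation that neither bone- nor island-components can be stranded since every part meets both boundaries --- is exactly the content the paper is gesturing at, just written out in full.
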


There is a diagram representation of the set of $\ltimes(S_n \times \text{diag}-\mathcal{P}_n)$ since its elements are $\underline{2}$-ramified partitions (see \cite{martin}). 

\begin{example} \label{explRamified}
The map defined in Proposition \ref{prop:injMap} is illustrated by the following pictures.

\includegraphics[scale=0.4]{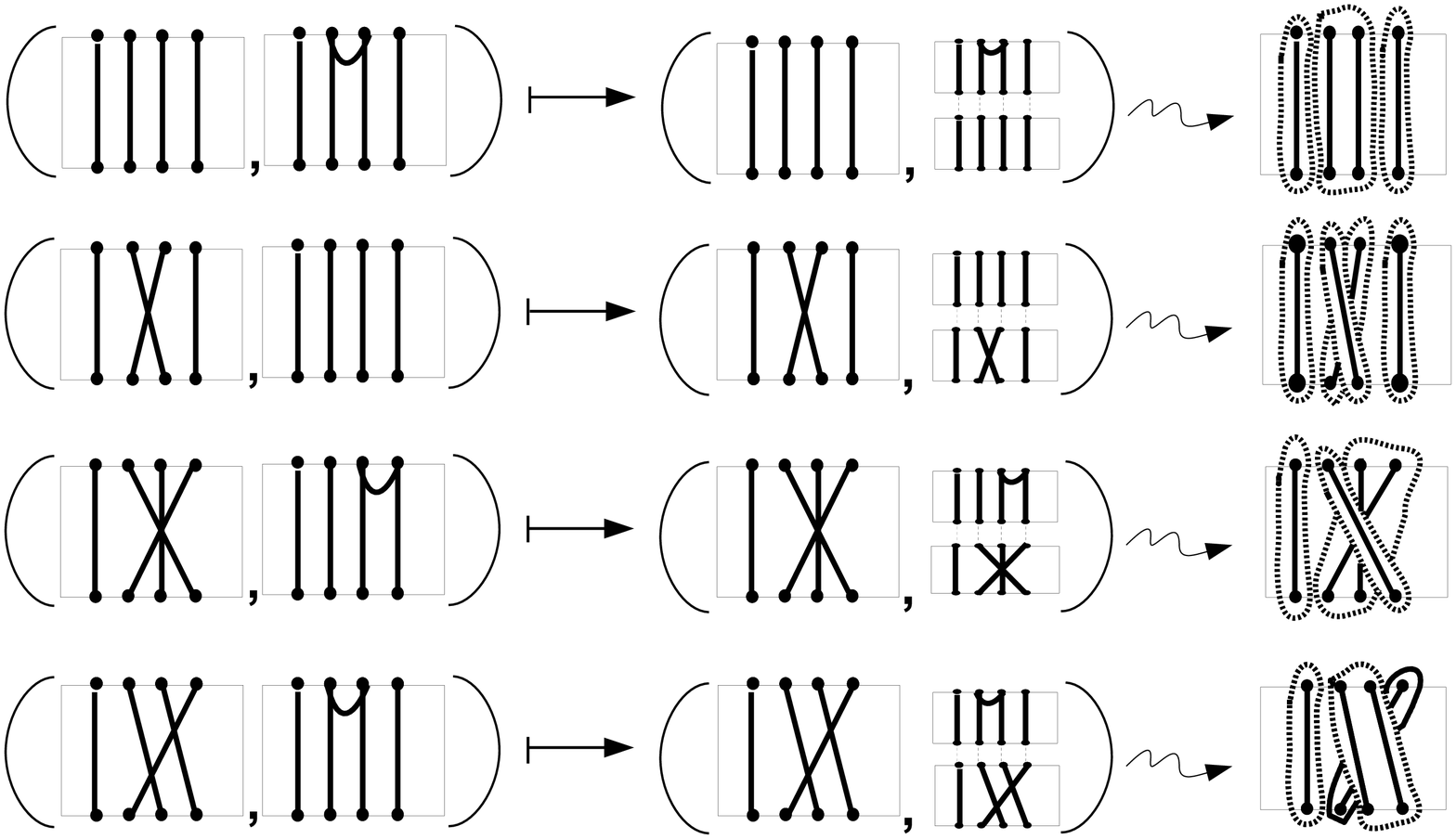}

\noindent In particular, these pictures describe the diagrammatic realization of some basis elements in $P_4^{\ltimes}.$  
\end{example}

\begin{coro}[See {\citep[\S 3.4]{martin}}]  \label{coro:dimSRP}
The dimension of $P_n^{\ltimes}$ is given by $n!B_n,$ where $B_n$ is the Bell number. \qed
\end{coro}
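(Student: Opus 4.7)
The plan is to just count the basis directly. By definition, $P_n^{\ltimes}$ is the free $F$-module on the set $\ltimes(S_n \times \mathrm{diag}\text{-}\mathcal{P}_n)$, so its dimension equals the cardinality of that set.

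First I would invoke Proposition \ref{prop:injMap}, which says $\ltimes$ is injective. Hence
\[
\dim P_n^{\ltimes} \;=\; \bigl|\ltimes(S_n \times \mathrm{diag}\text{-}\mathcal{P}_n)\bigr| \;=\; |S_n|\cdot|\mathrm{diag}\text{-}\mathcal{P}_n| \;=\; n!\cdot|\mathrm{diag}\text{-}\mathcal{P}_n|.
\]
So everything reduces to showing $|\mathrm{diag}\text{-}\mathcal{P}_n|=B_n$.

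For that, I would exhibit an explicit bijection $\Phi\colon \mathrm{diag}\text{-}\mathcal{P}_n \to \mathcal{P}_{\underline{n}}$. By definition, an element $p\in\mathrm{diag}\text{-}\mathcal{P}_n$ is a partition of $\underline{n}\cup\underline{n}'$ in which $i$ and $i'$ always lie in the same part. Thus each part of $p$ is a union of pairs $\{i,i'\}$, and $p$ is entirely determined by the induced partition of the index set $\underline{n}$ obtained by identifying each pair $\{i,i'\}$ with the label $i$. Concretely, set
\[
\Phi(p) \;=\; \bigl\{\, \{\,i : \{i,i'\}\subseteq c\,\} \,:\, c \text{ is a part of } p\,\bigr\}.
\]
An inverse is given by sending a partition $\{B_1,\ldots,B_k\}$ of $\underline{n}$ to $\{B_1\cup B_1',\ldots,B_k\cup B_k'\}$, where $B_j'=\{i' : i\in B_j\}$. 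Both maps are well defined and mutually inverse, so $\Phi$ is a bijection. Since $|\mathcal{P}_{\underline{n}}|=B_n$ by definition of the Bell number, we get $|\mathrm{diag}\text{-}\mathcal{P}_n|=B_n$ and therefore $\dim P_n^{\ltimes} = n!\, B_n$.

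There is no real obstacle here: the only thing to be careful about is to use Proposition \ref{prop:injMap} so that no two pairs $(a,b)\neq(c,d)$ in $S_n\times\mathrm{diag}\text{-}\mathcal{P}_n$ collapse to the same basis element under $\ltimes$; otherwise the count would only give an upper bound.
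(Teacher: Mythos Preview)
Your argument is correct. The paper does not actually prove this corollary: it is stated without proof and attributed to \cite{martin}. Your direct count---invoking the injectivity of $\ltimes$ from Proposition~\ref{prop:injMap} and then exhibiting the evident bijection between $\mathrm{diag}\text{-}\mathcal{P}_n$ and $\mathcal{P}_{\underline{n}}$---is a clean, self-contained verification with no gaps.
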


\begin{remark}
Notice that, $P_n^{\ltimes}$ is spanned by diagrams with $n$ propagating lines (See Example \ref{explRamified}). This means that, unlike the ramified partition algebras, the small ramified partition algebras do not depend on parameter $\delta.$     
\end{remark}

\begin{defn}
For any $\delta' \in F,$ we define $\Gamma_n$ as the subalgebra of $P_n(\delta')$ generated by $$\langle A^{i,j}  (i,j = 1,2, \ldots, n) \rangle .$$
\end{defn}

Note that the natural injection of $\Gamma_n$ into $P_n^{\ltimes}$ is given by $$A^{i,j} \mapsto (1,A^{i,j})$$ and there exists a natural injection of $FS_n$ into $P_n^{\ltimes}$ given by $$\sigma_{i,i+1} \mapsto (\sigma_{i,i+1},\sigma_{i,i+1}).$$ 

\begin{prop}[See {\citep[Prop. 2]{martin}}] \label{genSmll}
The algebra $P_n^{\ltimes}$ is generated by $(1,A^{i,i+1})$ and $(\sigma_{i,i+1}, \sigma_{i,i+1})$ ($i = 1,2, \ldots, n-1$). \qed
\end{prop}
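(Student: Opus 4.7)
The plan is to exploit the componentwise multiplication inherited from $P_n^{(\underline{2})}(\delta) \subseteq P_n(\delta') \otimes P_n(\delta')$: for any basis element $(a, ba) \in \ltimes(S_n \times \text{diag}-\mathcal{P}_n)$, one has the factorisation
\[
(a, ba) \;=\; (1, b) \cdot (a, a).
\]
It therefore suffices to produce every $(a, a)$ with $a \in S_n$ and every $(1, b)$ with $b \in \text{diag}-\mathcal{P}_n$ from the proposed generators.

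For the symmetric-group part, I would observe that the elements $(\sigma_{i,i+1}, \sigma_{i,i+1})$ satisfy the Coxeter relations of $S_n$, being the images of the simple reflections under the natural injection $FS_n \hookrightarrow P_n^{\ltimes}$ recorded just before the proposition. They therefore generate $\{(a, a) : a \in S_n\}$.

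For the partition part, I would use conjugation to widen the family $(1, A^{i,i+1})$. Since conjugation by a permutation merely relabels parts, $a A^{i,i+1} a^{-1} = A^{a(i),\, a(i+1)}$ in $P_n(\delta')$, so
\[
(a, a)\,(1, A^{i,i+1})\,(a^{-1}, a^{-1}) \;=\; \bigl(1,\, A^{a(i),\, a(i+1)}\bigr),
\]
which as $a \in S_n$ varies produces every $(1, A^{i,j})$ with $i \neq j$. An element $b \in \text{diag}-\mathcal{P}_n$ corresponds under the identification $i \leftrightarrow i'$ to a set partition of $\underline{n}$; by choosing a spanning tree on each of its blocks and multiplying the associated $(1, A^{i,j})$'s together, one builds $(1, b)$.

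The step that calls for the most care is this last one: one must verify that the iterated products in $P_n(\delta')$ really do give the intended diagonal partition without producing spurious factors of $\delta'$ coming from closed loops. This is guaranteed precisely by the observation recorded in the remark preceding the proposition, namely that every basis element of $P_n^{\ltimes}$ has $n$ propagating lines, so no closed loop can ever form during composition inside this subalgebra. The products therefore land cleanly in the basis $\ltimes(S_n \times \text{diag}-\mathcal{P}_n)$, and the generation claim follows.
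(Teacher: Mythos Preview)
Your argument is correct. The paper itself does not supply a proof of this proposition: it is stated with a citation to Martin and closed immediately with a \qed. Your factorisation $(a,ba)=(1,b)\,(a,a)$, the generation of the diagonal copy of $S_n$ by the $(\sigma_{i,i+1},\sigma_{i,i+1})$, and the production of every $(1,b)$ via conjugation and products of the $(1,A^{i,j})$ constitute exactly the natural argument, and your appeal to the $n$-propagating-lines remark correctly disposes of any stray parameter factors. There is therefore nothing in the paper to compare against---you have supplied a sound self-contained proof where the paper gives only a reference.
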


%////////////////////////////////////////////////////////////////////////////////////////////////////////
%
\section{The Juyumaya algebra of braids and ties} \label{braidTie}%//////////////////////////////////////
%
%////////////////////////////////////////////////////////////////////////////////////////////////////////

Following \cite{ryom2010}, we recall the Juyumaya algebra over the ring $\mathbb{C}[u,u^{-1}].$ 
  
\begin{defn}[See {\citep[\S 2]{ryom2010}}]
Let $u$ be an indeterminate over $\mathbb{C}$ and $\mathcal{A}$ be the principal ideal domain $\mathbb{C}[u,u^{-1}]$. The algebra $\mathcal{E}_n^{\mathcal{A}}(u)$ over $\mathcal{A}$ is the unital associative $\mathcal{A}$-algebra generated by the elements $T_1,~T_2, \ldots, T_{n-1}$ and $E_1,~E_2, \ldots, E_{n-1},$ which satisfy the defining relations

  \begin{eqnarray*}
    (A1) &T_iT_j = T_jT_i & \hbox{ if } ~|i-j| > 1\\
    (A2) &E_iE_j = E_jE_i & \forall \; i,j\\
    (A3) &E_i^2 = E_i \\
    (A4) &E_iT_i = T_iE_i\\
    (A5) &E_iT_j = T_jE_i & \hbox{ if } ~|i-j|>1\\
    (A6) &T_iT_jT_i = T_jT_iT_j & \hbox{ if } ~|i-j| = 1\\
    (A7) &E_jT_iT_j = T_iT_jE_i & \hbox{ if } ~|i-j| = 1\\
    (A8) &E_iE_jT_j = E_iT_jE_i = T_jE_iE_j & \hbox{ if } ~|i-j| = 1\\
    (A9) &T_i^2 = 1+(u-1)E_i(1-T_i) 
  \end{eqnarray*}
\end{defn}

Let $\mathbb{C}(u)$ be the field of rational function. We define $\mathcal{E}^0_n(u)$ as $$\mathcal{E}^0_n(u) := \mathcal{E}_n^{\mathcal{A}}(u) \otimes_{\mathcal{A}} \mathbb{C}(u)$$ where $\mathbb{C}(u)$ is made into an ${\mathcal{A}}$-module through inclusion.

\begin{coro}[See {\citep[Corollary 3]{ryom2010}}]  \label{coro:dimAJ}
The dimension of $\mathcal{E}^0_n(u)$ is given by $n!B_n,$ where $B_n$ is the Bell number. \qed
\end{coro}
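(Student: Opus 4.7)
My plan is to establish a basis for $\mathcal{E}^0_n(u)$ of size $n!B_n$ in two stages: first exhibit a spanning set of that size, then prove linear independence.

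For the spanning set, I would start by studying the subalgebra $\mathcal{B}_n \subseteq \mathcal{E}^0_n(u)$ generated by $E_1,\ldots,E_{n-1}$. Relations (A2) and (A3) say these are commuting idempotents, so $\mathcal{B}_n$ is spanned by square-free products $E_I := \prod_{i\in I}E_i$ indexed by subsets $I\subseteq\{1,\ldots,n-1\}$. One checks that $E_I$ depends only on the set partition of $\underline{n}$ whose blocks are the connected components of the graph with edges $\{i,i+1\}$ for $i\in I$; this gives a natural surjection from set partitions of $\underline{n}$ onto a spanning set of $\mathcal{B}_n$, so $\dim\mathcal{B}_n \le B_n$. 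Next, using (A1), (A4)--(A8), any monomial in the $T_i$'s and $E_j$'s can be pushed into the form $E_\alpha T_w$, where $E_\alpha$ is one of the $B_n$ partition idempotents and $T_w = T_{i_1}\cdots T_{i_\ell}$ is a chosen reduced expression for a permutation $w\in S_n$. Relations (A1) and (A6) handle the $T$-part exactly as for the Iwahori--Hecke algebra, giving at most $n!$ distinct $T_w$, while (A7) and (A8) let one slide every $E$ past a $T$ (with an appropriate index shift), and the quadratic relation (A9) controls occurrences of $T_i^2$. This yields $\dim\mathcal{E}^0_n(u) \le n!B_n$.

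For the matching lower bound, I would exhibit a faithful action of $\mathcal{E}^0_n(u)$ on a space of dimension at least $n!B_n$. A natural candidate is a tied-tensor-space representation generalizing the Jimbo construction for the Iwahori--Hecke algebra, in which $T_i$ acts via a braided $R$-matrix on adjacent tensor factors and $E_i$ acts as the projector onto the ``tied'' diagonal, so that the image acquires a basis naturally indexed by (set partition, permutation) pairs. Alternatively, one can argue via specialization: $\mathcal{E}_n^{\mathcal{A}}(u)$ is presented by relations that remain homogeneous in the appropriate sense, and any non-trivial linear dependence in $\mathcal{E}^0_n(u)$ would, after clearing denominators and setting $u=1$, produce a dependence in an algebra whose dimension equals $n!B_n$ by Corollary \ref{coro:dimSRP}, a contradiction.

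The main obstacle is confirming that the proposed normal form $E_\alpha T_w$ is well defined, i.e.\ that the reduction procedure is confluent in the presence of the mixed relation (A7), which intertwines $T$'s and $E$'s in a subtle way. This amounts to checking that all critical pairs between (A7), the braid moves (A6), and the quadratic relation (A9) reduce to the same normal form, which is the combinatorial heart of the argument.
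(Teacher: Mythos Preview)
The paper does not give its own proof of this statement; it is quoted from \cite{ryom2010} with a \qed\ and is used later as input to Theorem~\ref{isoMap}. So there is nothing to compare against, and I will assess your sketch on its own terms.

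Your overall strategy---a spanning set of size $n!B_n$, then linear independence via specialization---is the right one and is essentially what Ryom-Hansen does. But your treatment of the idempotent subalgebra $\mathcal{B}_n$ contains a genuine gap. The subalgebra generated by $E_1,\ldots,E_{n-1}$, using only (A2) and (A3), is spanned by the $2^{n-1}$ square-free monomials $E_I$, and these are \emph{not} indexed by arbitrary set partitions of $\underline{n}$: only interval partitions (blocks of consecutive integers) arise as connected-component partitions of a graph with edges $\{i,i+1\}$, and distinct $I$ give distinct such partitions, so nothing collapses. There is no surjection from all $B_n$ set partitions onto this spanning set. To obtain the $B_n$ ``partition idempotents'' $E_\alpha$ that appear in your normal form $E_\alpha T_w$, one must first introduce the conjugated elements $E_{ij}$ (for all pairs $i<j$, defined via (A7)), prove from (A4)--(A8) that they are commuting idempotents depending only on $\{i,j\}$, and show that a product $\prod_k E_{i_k j_k}$ depends only on the set partition generated by the pairs. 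That is the real combinatorial content of the upper bound; it is not a consequence of (A2) and (A3) alone, and your sketch skips it.

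For the lower bound, your specialization idea via Corollary~\ref{coro:dimSRP} is sound, but watch the logical order: in this paper the isomorphism $\mathcal{E}_n(1)\cong P_n^{\ltimes}$ (Theorem~\ref{isoMap}) is proved \emph{using} Corollary~\ref{coro:dimAJ}, so you cannot invoke it. What is available independently is the \emph{surjection} $\mathcal{E}_n(1)\twoheadrightarrow P_n^{\ltimes}$ (Propositions~\ref{prop:ajAlgHomo} and~\ref{genSmll}); combined with your upper bound $\dim\mathcal{E}_n(1)\le n!B_n$ and $\dim P_n^{\ltimes}=n!B_n$, this forces the spanning set to be a basis at $u=1$, after which clearing denominators lifts linear independence to $\mathbb{C}(u)$.
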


The Bell number making appearance in Corollary \ref{coro:dimAJ} indicates that there might be a connection between the Juyumaya algebra and the (small ramified) partition algebra. In section \ref{mainRslt} we present this connection. 

From the presentation of $\mathcal{E}_n^{\mathcal{A}}(u),$ relations $(A1)$, $(A6)$, $(A9)$ form a deformation of the Coxeter relations (see \citep[\S 1]{mathas}) of the symmetric group $S_n$. It is straightforward to verify the following result.

\begin{prop}
There exists a homomorphism from $\mathcal{E}_n^{\mathcal{A}}(u)$ to the group ring $\mathbb{\mathcal{A}}S_n$ of the symmetric group given by 
\begin{eqnarray*}
X \colon \mathcal{E}_n^{\mathcal{A}}(u) & \to & \mathcal{A}S_n \\
T_i & \mapsto & \sigma_{i,i+1} \\
E_i & \mapsto & 0. 
\end{eqnarray*} \qed
\end{prop}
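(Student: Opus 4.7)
The plan is to verify that the assignment $T_i \mapsto \sigma_{i,i+1}$, $E_i \mapsto 0$ respects all nine defining relations $(A1)$--$(A9)$, after which the existence of the homomorphism follows from the universal property of a presentation by generators and relations.

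First I would dispose of every relation that involves at least one $E$-generator, namely $(A2)$, $(A3)$, $(A4)$, $(A5)$, $(A7)$, and $(A8)$. Under the map each such relation becomes an identity of the form $0=0$ in $\mathcal{A}S_n$, since the image of any monomial containing some $E_i$ vanishes. This is immediate and requires no computation in $S_n$.

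Next I would treat the purely $T$-relations $(A1)$ and $(A6)$: these map to the commutation relation $\sigma_{i,i+1}\sigma_{j,j+1}=\sigma_{j,j+1}\sigma_{i,i+1}$ for $|i-j|>1$ and the braid relation $\sigma_{i,i+1}\sigma_{i+1,i+2}\sigma_{i,i+1}=\sigma_{i+1,i+2}\sigma_{i,i+1}\sigma_{i+1,i+2}$, both of which are standard Coxeter relations in $S_n$ (see \citep[\S 1]{mathas}).

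The only remaining relation is the quadratic relation $(A9)$, which is the only place where the deformation parameter $u$ appears. Under the map, its right-hand side $1+(u-1)E_i(1-T_i)$ becomes $1$ because of the factor $E_i$, while its left-hand side $T_i^2$ becomes $\sigma_{i,i+1}^2$. Since $\sigma_{i,i+1}$ is a transposition, $\sigma_{i,i+1}^2=1$ in $\mathcal{A}S_n$, so the relation is satisfied. There is no real obstacle here; the main point worth emphasising is that the specialisation $E_i\mapsto 0$ is precisely what kills the deformation term and collapses $(A9)$ to the involution relation in the symmetric group, so the map indeed extends to a well-defined $\mathcal{A}$-algebra homomorphism.
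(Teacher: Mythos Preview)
Your proposal is correct and follows exactly the approach the paper intends: the paper simply states that the result is ``straightforward to verify'' and omits the proof, so your relation-by-relation check that the assignment $T_i\mapsto\sigma_{i,i+1}$, $E_i\mapsto 0$ respects $(A1)$--$(A9)$ is precisely what is needed.
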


\noindent In particular, $\mathcal{A}S_n$ is isomorphic to a quotient of $\mathcal{E}_n^{\mathcal{A}}(u)$ when $u = 1.$

%///////////////////////////////////////////////////////////////////////////////////////////////////////////
%
\section{Relationship of the Juyumaya algebra to $P_n^{\ltimes}$} \label{mainRslt} %////////////////////////
%
%///////////////////////////////////////////////////////////////////////////////////////////////////////////

In response to a remark of Ryom-Hansen in \cite{ryom2010}, we present new results that establish a connection between the Juyumaya algebra and the partition algebra, via the small ramified partition algebra. 

Let $\mathbb{C}$ be the field of complex numbers which is a $\mathbb{C}[u,u^{-1}]$-algebra (that is, with $u$ specified to a complex number $x$). Denote the $\mathbb{C}$-algebra $\mathcal{E}_n^{\mathcal{A}}(u) \otimes_{\mathcal{A}} \mathbb{C}$ by $\mathcal{E}_n(x).$ Here, we shall only need the case $x = 1.$ 

\begin{prop} \label{prop:ajAlgHomo}
The map $\rho \colon \mathcal{E}_n(1) \to \mathbb{C}S_n \otimes_{\mathbb{C}} \Delta_n$ given by 
\begin{eqnarray*}
E_i & \mapsto & (1, A^{i,i+1})\\
T_i & \mapsto & (\sigma_{i,i+1}, \sigma_{i,i+1})
\end{eqnarray*}
defines a $\mathbb{C}$-algebra homomorphism.
\end{prop}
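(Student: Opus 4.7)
The plan is to verify directly that the prescribed images $\rho(E_i) = (1, A^{i,i+1})$ and $\rho(T_i) = (\sigma_{i,i+1}, \sigma_{i,i+1})$ satisfy each of the nine defining relations (A1)--(A9) at $u = 1$ inside the target algebra $\mathbb{C}S_n \otimes_{\mathbb{C}} \Delta_n$; by the universal property of the presentation of $\mathcal{E}_n(1)$, this is exactly what is needed for $\rho$ to extend to a $\mathbb{C}$-algebra homomorphism on the whole algebra. Because the target is a tensor product, each identity splits into a first-coordinate check in $\mathbb{C}S_n$ and a second-coordinate check in $\Delta_n$. The first coordinate involves only $1$'s and adjacent transpositions, so every relation reduces there either trivially (whenever a $1$ appears from some $\rho(E_j)$) or to the usual Coxeter relations of $S_n$ (for (A1) and (A6)), or to $\sigma_{i,i+1}^2 = 1$ (for (A9) at $u=1$, which becomes $T_i^2 = 1$).

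The substance lies in the second-coordinate verifications, and for these I intend to rely on three elementary properties of the generators of $\Delta_n$, each read off from the partition-diagram description of $P_n(\delta')$: (i) the $A^{i,j}$ are idempotent and commute pairwise; (ii) one has the conjugation identity $\sigma A^{i,j} \sigma^{-1} = A^{\sigma(i),\sigma(j)}$ for any $\sigma \in S_n$; and (iii) the merging identity $A^{i,j}A^{j,k} = A^{i,j}A^{i,k} = A^{i,k}A^{j,k}$ holds, each product representing the same diagram whose block through $i$, $j$, $k$ is $\{i,j,k,i',j',k'\}$. Fact (i) handles (A2) and (A3). Fact (ii), together with the observation that $\sigma_{i,i+1}$ fixes the unordered pair $\{i,i+1\}$, yields the commutation (A4); the disjoint-support version of (ii) then gives (A5).

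The content is concentrated in (A7) and (A8). For (A7) with $j = i+1$, the second-coordinate claim is $A^{i+1,i+2}\sigma_{i,i+1}\sigma_{i+1,i+2} = \sigma_{i,i+1}\sigma_{i+1,i+2}A^{i,i+1}$; moving $A^{i,i+1}$ leftward across the two transpositions via two successive applications of (ii) converts it first to $A^{i,i+2}$ and then to $A^{i+1,i+2}$, which is the desired equality. For (A8) with $j = i+1$ the two needed identities are $A^{i,i+1}A^{i+1,i+2}\sigma_{i+1,i+2} = A^{i,i+1}\sigma_{i+1,i+2}A^{i,i+1}$ and $A^{i,i+1}\sigma_{i+1,i+2}A^{i,i+1} = \sigma_{i+1,i+2}A^{i,i+1}A^{i+1,i+2}$; conjugating the inner $A^{i,i+1}$ through $\sigma_{i+1,i+2}$ via (ii) produces $A^{i,i+2}$, and the merging identity (iii) then identifies $A^{i,i+1}A^{i,i+2}$ with $A^{i,i+1}A^{i+1,i+2}$, closing each chain. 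I expect no genuine obstacle in the argument; the only mild bookkeeping is to ensure every intermediate expression stays in $\Delta_n$, which is automatic because $\Delta_n$ is stable under $S_n$-conjugation by construction.
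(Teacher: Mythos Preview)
Your proposal is correct and follows essentially the same approach as the paper: a direct relation-by-relation verification that the images of $E_i$ and $T_i$ satisfy (A1)--(A9) in the target algebra, with the work concentrated in the second tensor factor. Your isolation of the three diagrammatic facts (idempotency/commutativity of the $A^{i,j}$, the $S_n$-conjugation rule, and the merging identity) organizes the computation a bit more cleanly than the paper's case-by-case treatment, but the argument is the same; just remember that (A7) and (A8) also require the symmetric case $j=i-1$, which follows by the same reasoning with indices swapped.
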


\begin{proof}
First, it is easy to see that the tensor product $\mathbb{C}S_n \otimes_{\mathbb{C}} \Delta_n$ makes sense. To show that this map is an algebra homomorphism we check that the relations (A1)--(A9) hold when $(1, A^{i,i+1})$ is put in place of $E_i$ and $(\sigma_{i,i+1}, \sigma_{i,i+1})$ is put in place of $T_i$ as follows.

\begin{itemize}
\item[(A1)] $\rho(T_iT_j) = (\sigma_{i,i+1}, \sigma_{i,i+1})\; (\sigma_{j,j+1},\sigma_{j,j+1})$ $= (\sigma_{i,i+1}\sigma_{j,j+1}, \; \sigma_{i,i+1}\sigma_{j,j+1})$ and \\
$\rho(T_jT_i) = (\sigma_{j,j+1}, \sigma_{j,j+1})$ $(\sigma_{i,i+1},\sigma_{i,i+1}) = (\sigma_{j,j+1}\sigma_{i,i+1},$ $\sigma_{j,j+1}\sigma_{i,i+1}).$\\
Since $|i-j|>1,$ $\sigma_{i,i+1}\sigma_{j,j+1} = \sigma_{j,j+1}\sigma_{i,i+1}.$ Thus, \\ 
$(\sigma_{i,i+1}\sigma_{j,j+1},$ $\sigma_{i,i+1}\sigma_{j,j+1}) = (\sigma_{j,j+1}\sigma_{i,i+1},$ $\sigma_{j,j+1}\sigma_{i,i+1})$ as required.

\bigskip
Diagrammatically, this may be represented as follows.

\includegraphics[scale=0.4]{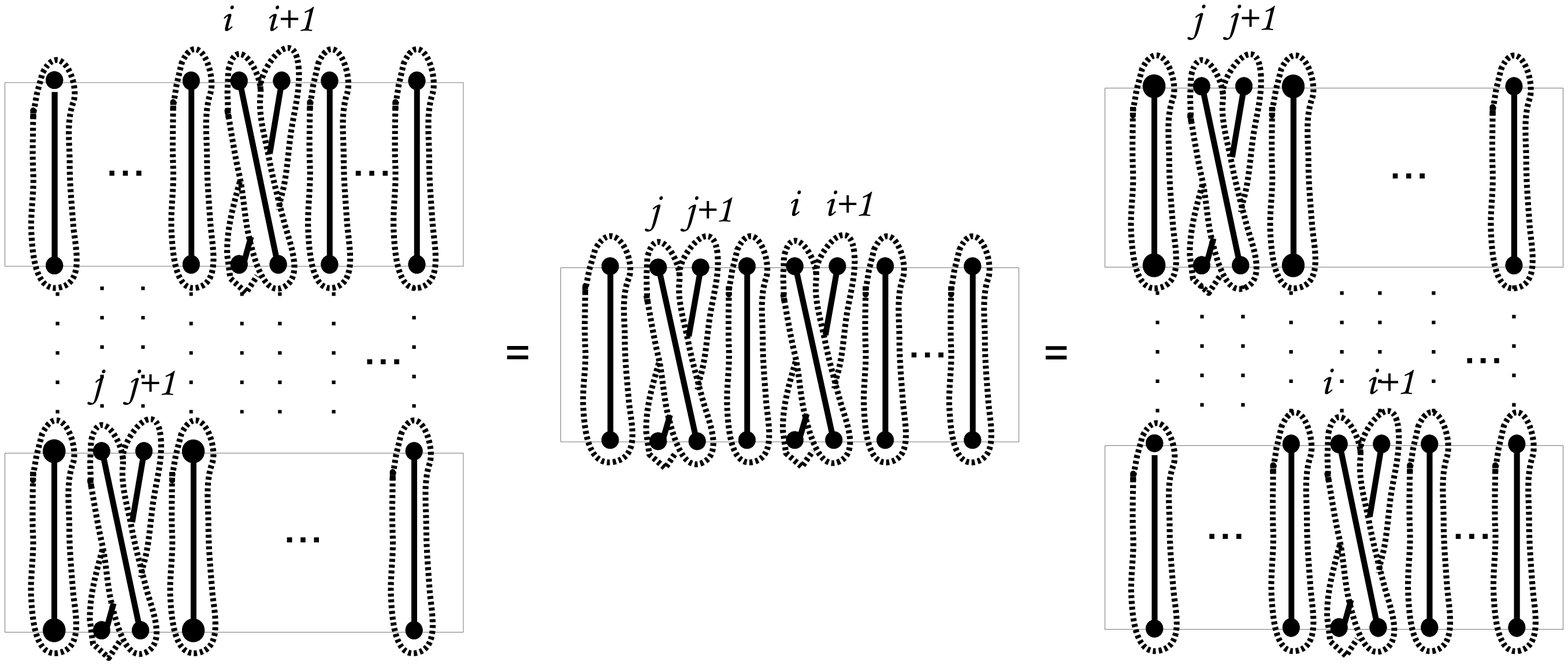}

\item[(A2)] $\rho(E_iE_j) = (1, A^{i,i+1})\; (1, A^{j,j+1}) = (1, \; A^{i,i+1}A^{j,j+1}) = (1, \; A^{j,j+1}A^{i,i+1})$\\
$ = (1, A^{j,j+1})\; (1, A^{i,i+1}) = \rho(E_jE_i)$.

The second equality follows from the definition of the tensor product of algebras, the third equality is a consequence of commutativity of $A^{k,k+1}$s and the fourth equality follows again from the definition of the tensor product of algebras.

\item[(A3)] $\rho(E_i^2) = (1, A^{i,i+1})\; (1, A^{i,i+1}) = (1, \; A^{i,i+1}A^{i,i+1}) = (1, \; A^{i,i+1}) = \rho(E_i).$
\item[(A4)] Similar to the proof of relation (A2), $\rho(E_iT_i) = (1, A^{i,i+1})\; (\sigma_{i,i+1}, \; \sigma_{i,i+1})$ \\ $= (1 \sigma_{i,i+1}, \; A^{i,i+1} \sigma_{i,i+1}) = (\sigma_{i,i+1} 1, \; \sigma_{i,i+1} A^{i,i+1})$\\ $= (\sigma_{i,i+1}, \;\sigma_{i,i+1}) \; (1, A^{i,i+1}) = \rho(T_iE_i).$
\item[(A5)] $\rho(E_iT_j) = (1, A^{i,i+1})\; (\sigma_{j,j+1}, \; \sigma_{j,j+1}) = (1 \sigma_{j,j+1}, \; A^{i,i+1} \sigma_{j,j+1})$ and \\ 
$\rho(T_jE_i) = (\sigma_{j,j+1}, \;\sigma_{j,j+1}) \; (1, A^{i,i+1}) = (\sigma_{j,j+1} 1, \; \sigma_{j,j+1} A^{i,i+1}).$\\
Since $|i-j|>1, \; A^{i,i+1} \sigma_{j,j+1} = \sigma_{j,j+1} A^{i,i+1}.$ Thus, \\
$(\sigma_{j,j+1}, \; A^{i,i+1} \sigma_{j,j+1}) = (\sigma_{j,j+1}, \; \sigma_{j,j+1} A^{i,i+1})$ as required.
\item[(A6)] $T_iT_jT_i$ corresponds to $(\sigma_{i,i+1}, \sigma_{i,i+1})\; (\sigma_{j,j+1}, \; \sigma_{j,j+1}) \; (\sigma_{i,i+1}, \sigma_{i,i+1}) = (\sigma_{i,i+1}\sigma_{j,j+1}\sigma_{i,i+1}, \sigma_{i,i+1}\sigma_{j,j+1}\sigma_{i,i+1})$ and \\ 
$T_jT_iT_j$ corresponds to $(\sigma_{j,j+1}, \sigma_{j,j+1})\; (\sigma_{i,i+1}, \; \sigma_{i,i+1}) \; (\sigma_{j,j+1}, \sigma_{j,j+1}) = (\sigma_{j,j+1}\sigma_{i,i+1}\sigma_{j,j+1}, \sigma_{j,j+1}\sigma_{i,i+1}\sigma_{j,j+1}).$\\
Since $|i-j|=1, \; \sigma_{i,i+1}\sigma_{j,j+1}\sigma_{i,i+1} = \sigma_{j,j+1}\sigma_{i,i+1}\sigma_{j,j+1}.$\\
Thus, $(\sigma_{i,i+1}\sigma_{j,j+1}\sigma_{i,i+1}, \sigma_{i,i+1}\sigma_{j,j+1}\sigma_{i,i+1}) = \\ (\sigma_{j,j+1}\sigma_{i,i+1}\sigma_{j,j+1}, \sigma_{j,j+1}\sigma_{i,i+1}\sigma_{j,j+1})$ as required.
\item[(A7)] The element $E_jT_iT_j$ is mapped to $(1, A^{j,j+1}) (\sigma_{i,i+1},\sigma_{i,i+1}) (\sigma_{j,j+1},\sigma_{j,j+1})$ \\
$ = (1 \sigma_{i,i+1}\sigma_{j,j+1}, \; A^{j,j+1}\sigma_{i,i+1}\sigma_{j,j+1})$ and \\
the element $T_iT_jE_i$ is mapped to $(\sigma_{i,i+1},\sigma_{i,i+1})  (\sigma_{j,j+1},\sigma_{j,j+1}) (1, A^{i,i+1}) \\ = (\sigma_{i,i+1}\sigma_{j,j+1} 1, \; \sigma_{i,i+1}\sigma_{j,j+1} A^{i,i+1}).$\\
Since $|i-j|=1, \; A^{j,j+1}\sigma_{i,i+1}\sigma_{j,j+1}= \sigma_{i,i+1}\sigma_{j,j+1}A^{i,i+1}$ and the result follows.

Proving relation (A7) using diagrams:

\includegraphics[scale=0.4]{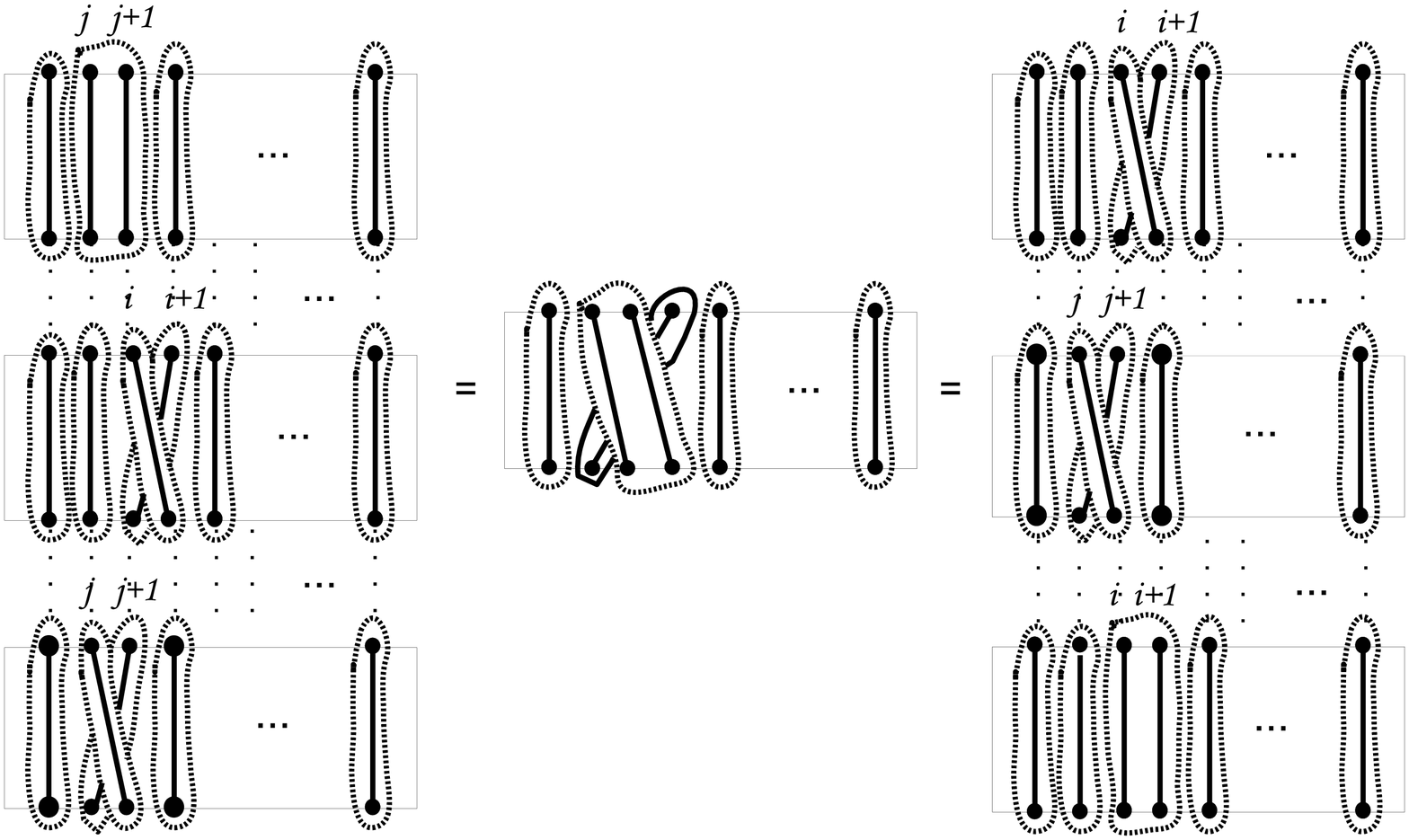}

\item[(A8)]The relation $E_iE_jT_j$ corresponds to $(1,A^{i,i+1})(1,A^{j,j+1})(\sigma_{j,j+1},\sigma_{j,j+1})$ $= (11\sigma_{j,j+1}, A^{i,i+1}A^{j,j+1}\sigma_{j,j+1}),$\\
the relation $E_iT_jE_i$ corresponds to $(1,A^{i,i+1})(\sigma_{j,j+1},\sigma_{j,j+1})(1,A^{i,i+1})= (1\sigma_{j,j+1}1, A^{i,i+1}\sigma_{j,j+1}A^{i,i+1}),$ and \\
the relation $T_jE_iE_j$ corresponds to $(\sigma_{j,j+1},\sigma_{j,j+1})(1,A^{i,i+1})(1,A^{j,j+1})= (\sigma_{j,j+1}11, \sigma_{j,j+1}A^{i,i+1}A^{j,j+1}).$\\
$A^{i,i+1}A^{j,j+1}\sigma_{j,j+1} = A^{i,i+1}\sigma_{j,j+1}A^{i,i+1} = \sigma_{j,j+1}A^{i,i+1}A^{j,j+1}$ since $|i-j| = 1$ as required.

Relation (A8) may be described using diagrams as follows.

\includegraphics[scale=0.4]{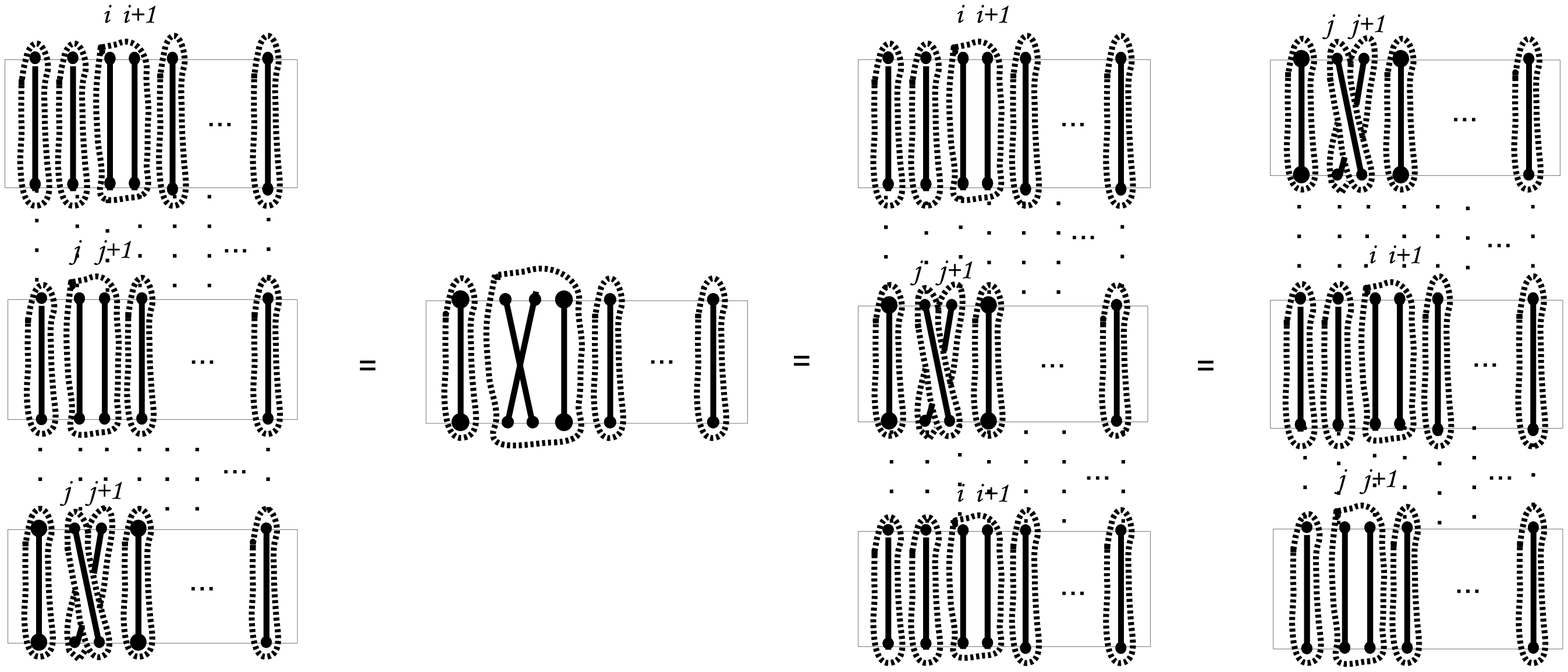}

\item[(A9)] Since $u$ is fixed to $1,$ it follows that $T_i^2 = 1$ and the relation corresponds to $(\sigma_{i,i+1}, \sigma_{i,i+1}) \; (\sigma_{i,i+1}, \sigma_{i,i+1}) = (\sigma_{i,i+1}\sigma_{i,i+1}, \sigma_{i,i+1}\sigma_{i,i+1}) = (1,1)$ as required.

\end{itemize}
\end{proof}

We leave it as an exercise to use diagrams to check relations (A2) -- (A6) and (A9). Next we show that

\begin{theo} \label{isoMap}
The map $\phi \colon \mathcal{E}_n(1) \to P_n^{\ltimes}$ given by 
\begin{eqnarray*}
E_i & \mapsto & (1, A^{i,i+1})\\
T_i & \mapsto & (\sigma_{i,i+1}, \sigma_{i,i+1})
\end{eqnarray*}
defines a $\mathbb{C}$-algebra isomorphism.
\end{theo}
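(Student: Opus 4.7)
The plan is to verify three things about $\phi$: it is a well-defined $\mathbb{C}$-algebra homomorphism, it is surjective, and it is injective. The first two are essentially immediate from results already in hand; the real content lies in the third.

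That $\phi$ is a homomorphism follows at once from Proposition \ref{prop:ajAlgHomo}. Indeed, the target elements $(1, A^{i,i+1}) = \ltimes(1, A^{i,i+1})$ and $(\sigma_{i,i+1}, \sigma_{i,i+1}) = \ltimes(\sigma_{i,i+1}, 1)$ both lie in $\ltimes(S_n \times \text{diag}-\mathcal{P}_n)$, and hence in $P_n^{\ltimes} \subseteq \mathbb{C}S_n \otimes_{\mathbb{C}} \Delta_n$. So $\rho$ corestricts to a well-defined map $\phi$ into $P_n^{\ltimes}$, and this corestriction is a homomorphism because $\rho$ is. For surjectivity, Proposition \ref{genSmll} states that $P_n^{\ltimes}$ is generated as an algebra by $(1, A^{i,i+1})$ and $(\sigma_{i,i+1}, \sigma_{i,i+1})$ for $i = 1, \ldots, n-1$, which are precisely $\phi(E_i)$ and $\phi(T_i)$.

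Injectivity will then follow by a dimension count. Corollary \ref{coro:dimSRP} gives $\dim_{\mathbb{C}} P_n^{\ltimes} = n! B_n$, and the matching bound $\dim_{\mathbb{C}} \mathcal{E}_n(1) \leq n! B_n$ is what remains; a surjection between finite-dimensional $\mathbb{C}$-vector spaces of equal dimension is automatically an isomorphism. The main subtlety is exactly this upper bound on the specialization: Corollary \ref{coro:dimAJ} provides the dimension $n! B_n$ only after extending scalars to $\mathbb{C}(u)$, whereas specializing $u \mapsto 1$ could a priori interact with torsion in the $\mathcal{A}$-module $\mathcal{E}_n^{\mathcal{A}}(u)$. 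The cleanest route is to invoke Ryom-Hansen's explicit $\mathcal{A}$-spanning set for $\mathcal{E}_n^{\mathcal{A}}(u)$ of cardinality $n! B_n$: its image in $\mathcal{E}_n(1) = \mathcal{E}_n^{\mathcal{A}}(u) \otimes_{\mathcal{A}} \mathbb{C}$ still spans, which yields the desired inequality. This specialization step is the main obstacle; everything else is formal.
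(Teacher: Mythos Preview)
Your proposal is correct and follows essentially the same route as the paper: verify the homomorphism via Proposition~\ref{prop:ajAlgHomo}, surjectivity via Proposition~\ref{genSmll}, and injectivity by the dimension count from Corollaries~\ref{coro:dimSRP} and~\ref{coro:dimAJ}. Your treatment is in fact slightly more careful than the paper's: you explicitly flag that Corollary~\ref{coro:dimAJ} is stated for $\mathcal{E}_n^0(u)$ rather than for the specialization $\mathcal{E}_n(1)$, and you close this gap by invoking Ryom-Hansen's $\mathcal{A}$-spanning set to get $\dim_{\mathbb{C}}\mathcal{E}_n(1)\le n!B_n$, which together with surjectivity forces equality.
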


\begin{proof}
The map $\phi$ is well-defined since by Proposition \ref{genSmll} $(1, A^{i,i+1})$ and $(\sigma_{i,i+1}, \sigma_{i,i+1})$ generates precisely $P_n^{\ltimes}$.

In order to check that $\phi$ is an algebra homomorphism, we need to verify that the defining relations of $\mathcal{E}_n(1)$ are satisfied in $P_n^{\ltimes}$ and this has already been shown in Proposition \ref{prop:ajAlgHomo}. All that remains is to show that the map is an isomorphism.
By Corollary \ref{coro:dimAJ} and by Corollary \ref{coro:dimSRP}, the dimensions of $\mathcal{E}_n(1)$ and $P_n^{\ltimes}$ are equal. Moreover, the map $\phi$ is surjective since the images of the generators generate $P_n^{\ltimes}$. 

Thus, the preceding facts together imply that $\phi$ is an isomorphism.
\end{proof}

%///////////////////////////////////////////////////////////////////////////////////////////////////////////
%
\section{Representation theory} \label{repthry}%////////////////////////////////////////////////////////////
%
%///////////////////////////////////////////////////////////////////////////////////////////////////////////

Generic irreducible representations of the Juyumaya algebra are constructed for the cases $n = 2,3$ in \cite{aicardi}, \cite{ryom2010}. Here we do all other cases, by reference to \cite{martin}.

In the previous section we established, for each $n \in \mathbb{N},$ an isomorphism between the algebras $\mathcal{E}_n(1)$ and $P_n^{\ltimes}.$ With this result we have implicitly determined the complex representation theory of $\mathcal{E}_n(1)$ since the representation theory of $P_n^{\ltimes}$ over $\mathbb{C}$ is already known (cf. \cite{martin}) and    

\begin{theo}[See {\citep[Theorem 4]{martin}}] \label{srpSplit}
For all $n,$ the algebra $P_n^{\ltimes}$ over $\mathbb{C}$ is split semisimple. \qed
\end{theo}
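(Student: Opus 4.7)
The plan is to exhibit $P_n^{\ltimes}$ as a skew group algebra of a finite commutative semisimple $\mathbb{C}$-algebra by the finite group $S_n$, and then to invoke standard Maschke-type / Clifford-theoretic results to conclude split semisimplicity. More precisely, using the bijection $\ltimes\colon S_n\times\text{diag}\text{-}\mathcal{P}_n\to\ltimes(S_n\times\text{diag}\text{-}\mathcal{P}_n)$, I would identify $P_n^{\ltimes}$ as a vector space with $\Gamma_n\otimes_{\mathbb{C}}\mathbb{C}S_n$ and show, by a direct computation inside $P_n^{(\underline{2})}(\delta)$, that the inherited multiplication reads
\[
(b\otimes a)(d\otimes c)=b\,(a\cdot d)\otimes ac,
\]
where $S_n$ acts on $\Gamma_n$ by relabeling: $a\cdot A^{i,j}=A^{a(i),a(j)}$. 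The content of this step is essentially Proposition~\ref{prop:injMap} together with the fact, used already in the proof of Theorem~\ref{isoMap}, that an element $(a,ba)$ commutes with the permutation action in the controlled way above.

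The next step is to analyze $\Gamma_n$. Since the generators $A^{i,j}$ are commuting idempotents and the subalgebra they generate is isomorphic (as a monoid algebra) to the semigroup algebra of the semilattice of set partitions of $\underline{n}$ under the join operation, $\Gamma_n$ is a finite-dimensional commutative $\mathbb{C}$-algebra whose elements form a semilattice of idempotents. A standard Möbius inversion over this semilattice produces a complete system $\{\epsilon_\pi\}_{\pi\in\mathcal{P}_{\underline{n}}}$ of primitive orthogonal idempotents, so $\Gamma_n\cong\mathbb{C}^{B_n}$ as $\mathbb{C}$-algebras, hence split semisimple; and the characters of $\Gamma_n$ are naturally indexed by set partitions of $\underline{n}$.

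With the skew product presentation $P_n^{\ltimes}\cong\Gamma_n\rtimes\mathbb{C}S_n$ in hand and $\Gamma_n$ split semisimple, the conclusion follows from the general principle that for a finite group $G$ acting on a split semisimple finite-dimensional $\mathbb{C}$-algebra $A$, the skew group algebra $A\rtimes\mathbb{C}G$ is again split semisimple (standard Maschke-type averaging argument, using that $|G|$ is invertible in $\mathbb{C}$). Explicitly, the simple modules of $P_n^{\ltimes}$ are described by Clifford theory: they are indexed by pairs $(\lambda,\mu)$ with $\lambda\vdash n$ (an $S_n$-orbit on $\mathcal{P}_{\underline{n}}$, parametrized by the cycle type of a set partition) and $\mu$ an irreducible representation of the stabilizer $\prod_i (S_i\wr S_{m_i})$, where $m_i$ is the multiplicity of $i$ in $\lambda$. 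A dimension count via Frobenius reciprocity reproduces $\dim P_n^{\ltimes}=n!B_n$, matching Corollary~\ref{coro:dimSRP} and providing a sanity check.

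The main obstacle I anticipate is the very first step: verifying cleanly that the multiplication inherited from $P_n^{(\underline{2})}(\delta)$ on the image of $\ltimes$ actually agrees with the skew product formula above. The verification is a diagram chase in the ramified partition algebra, and one must be careful that composition of a diagonal partition with a permutation does not create any non-propagating components (so that the $\delta_i$ parameters never appear, which is consistent with the remark that $P_n^{\ltimes}$ is $\delta$-independent). Once this compatibility is pinned down, the semisimplicity and the parametrization of simples fall out of purely formal skew-group considerations.
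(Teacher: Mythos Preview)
The paper does not actually prove this theorem: it is stated with a citation to \cite{martin} and closed immediately with \qed, so there is no in-paper argument to compare against. Your proposal therefore goes well beyond what the present paper does, by supplying a self-contained proof rather than an appeal to the literature.

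That said, your argument is sound and is essentially the natural one. The identification $P_n^{\ltimes}\cong \Gamma_n\rtimes \mathbb{C}S_n$ follows exactly as you outline: for $(a,ba),(c,dc)\in\ltimes(S_n\times\text{diag-}\mathcal{P}_n)$ the componentwise product in $\mathbb{C}S_n\otimes\Delta_n$ is $(ac,\,badc)=(ac,\,(b\,ada^{-1})\,ac)$, and since conjugation of a diagonal partition by a permutation is again diagonal and products of diagonal partitions are diagonal, this lands back in the image of $\ltimes$ with the skew multiplication you wrote down (no $\delta_i$ factors appear because every component is propagating). Your analysis of $\Gamma_n$ as the semigroup algebra of the join-semilattice of set partitions, hence $\mathbb{C}^{B_n}$ via M\"obius inversion, is correct, and the Maschke/Clifford argument then gives split semisimplicity together with the indexing of simples by an integer partition $\lambda\vdash n$ and an irreducible of the stabilizer $\prod_i S_i\wr S_{m_i}$. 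One terminological quibble: you should say ``type'' or ``shape'' of a set partition rather than ``cycle type''. Otherwise the only work left is the diagram check you already flagged, which is routine.
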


We can now prove that

\begin{theo}
For all $n,$ the algebra $\mathcal{E}_n(x)$ is generically semisimple.
\end{theo}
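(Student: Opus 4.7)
The plan is to combine Theorem \ref{isoMap} with Theorem \ref{srpSplit} to obtain a single semisimple specialization at $u=1$, and then to propagate this to generic semisimplicity via a standard base-change argument in the spirit of Cline--Parshall--Scott \cite{cline1999}.

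First I would record the anchor point. By Theorem \ref{isoMap} the algebra $\mathcal{E}_n(1)$ is isomorphic as a $\mathbb{C}$-algebra to $P_n^{\ltimes}$, and by Theorem \ref{srpSplit} the latter is split semisimple over $\mathbb{C}$. Hence $\mathcal{E}_n(1)$ is split semisimple. This identifies a specific maximal ideal $(u-1)$ of $\mathcal{A} = \mathbb{C}[u,u^{-1}]$ over which the family $\mathcal{E}_n^{\mathcal{A}}(u)$ has semisimple fibre.

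To promote this single point to an open locus, I would use that $\mathcal{E}_n^{\mathcal{A}}(u)$ is free as an $\mathcal{A}$-module of rank $n!B_n$ --- this is the statement underlying Corollary \ref{coro:dimAJ}, established via the explicit basis of \cite{ryom2010}. Fix such an $\mathcal{A}$-basis $\{b_i\}$ and form the Gram matrix $G(u) = (\mathrm{tr}(L_{b_i b_j}))_{i,j}$ of the regular trace form, where $L_c$ denotes left multiplication by $c$. Its determinant $d(u) := \det G(u)$ is a well-defined element of $\mathcal{A}$, and for any $x \in \mathbb{C}^{\times}$ the base change $\mathcal{E}_n(x) = \mathcal{E}_n^{\mathcal{A}}(u) \otimes_{\mathcal{A}} \mathbb{C}$ has Gram matrix $G(x)$. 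Since for a finite-dimensional algebra over the characteristic-zero field $\mathbb{C}$ semisimplicity is equivalent to non-degeneracy of the regular trace form, $\mathcal{E}_n(x)$ is semisimple iff $d(x) \neq 0$. The anchor $x = 1$ forces $d(1) \neq 0$, so $d(u)$ is a nonzero Laurent polynomial and therefore vanishes on a finite subset of $\mathbb{C}^{\times}$. Thus $\mathcal{E}_n(x)$ is semisimple for all but finitely many values of $x$, which is the asserted generic semisimplicity.

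The main obstacle --- and the only place where care is really needed --- is the book-keeping required to apply this CPS-style machinery rigorously: one must invoke \cite{ryom2010} to know that $\mathcal{E}_n^{\mathcal{A}}(u)$ is $\mathcal{A}$-free of rank $n!B_n$ (so that the trace form exists over $\mathcal{A}$ and specialises correctly), and one must verify that the trace-form discriminant really cuts out the non-semisimple locus in this family. Both ingredients are standard; once they are quoted from \cite{ryom2010} and \cite{cline1999}, the remainder of the proof reduces to the one-line discriminant argument above.
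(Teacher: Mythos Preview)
Your proof is correct and follows essentially the same route as the paper: both establish split semisimplicity at the anchor $x=1$ via Theorems \ref{isoMap} and \ref{srpSplit}, and then invoke the Cline--Parshall--Scott principle that split semisimplicity is a Zariski-open condition. The only difference is one of presentation: the paper cites \cite{cline1999} as a black box, whereas you unpack the mechanism explicitly through the discriminant of the regular trace form (using the $\mathcal{A}$-freeness of $\mathcal{E}_n^{\mathcal{A}}(u)$ from \cite{ryom2010}), which has the side benefit of showing that the non-semisimple locus is in fact finite.
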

(By \emph{generically} we mean in a Zariski open subset of the complex space)

\begin{proof}
By Theorem \ref{isoMap}, $\mathcal{E}_n(1)$ is isomorphic to the algebra $P_n^{\ltimes}$ and by Theorem \ref{srpSplit} $P_n^{\ltimes}$ is split semisimple over $\mathbb{C}$. This implies that $\mathcal{E}_n(1)$ is split semisimple over $\mathbb{C}$. Split semisimplicity is a generic property according to Parshall, Cline and Scott \citep[\S 1]{cline1999}. That is, the split semisimplicity property holds (in our case) on a Zariski open subset (see \cite{smith2000}) of complex space. Therefore, $\mathcal{E}_n(x)$ is split semisimple for generic choices of $x \in \mathbb{C}.$ But $\mathcal{E}_n(x)$ is semisimple if and only if it is split semisimple since we are working over an algebraically closed field of characteristic zero.
\end{proof}

\noindent {\bf Acknowledgements.} The results of this paper form part of the author's doctoral thesis. The author would like to express her gratitude to Paul Martin for introducing her to this field and for helpful discussions and encouragement, and to Andrew Reeves, Alison Parker, Maud De Visscher, for useful comments on the earlier version of the paper. She would also like to thank City University London (research studentship) for financial support. She is grateful to the University of Leeds for hospitality.

\bibliographystyle{plain}
\bibliography{sample1}

\end{document}